\numberwithin{equation}{section}
\newtheorem{Theorem}{Theorem}
\newtheorem*{Note*}{Note}
\newtheorem{Proposition}{Proposition}
\newtheorem{Remark}{Remark}
\newtheorem{Lemma}{Lemma}
\newtheorem*{Recall*}{Recall}
\newcommand{\geqs}{\geqslant}
\newcommand{\leqs}{\leqslant}
\definecolor{ao(english)}{rgb}{0.0, 0.5, 0.0}
\title{Parisian ruin for the dual risk process in discrete-time}
\author{
        Zbigniew Palmowski$^{a,}$\footnote{First author. E-mail address: zbigniew.palmowski@gmail.com}\,, Lewis Ramsden$^{b,}$\footnote{Corresponding author. E-mail address: L.M.Ramsden@liverpool.ac.uk} \, and Apostolos D. Papaioannou$^{b,}$\footnote{Third author. E-mail address: papaion@liverpool.ac.uk}       \\
      \\ $^a$Department of Applied Mathematics\\
       Wroc\l{}aw University of Science and Technology \\
        Wroc\l{}aw, Poland; \\
               $^b$Institute for Financial and Actuarial Mathematics \\
                Department of Mathematical Sciences\\
        University of Liverpool\\
        Liverpool, L69 7ZL, United Kingdom}
\date{}
\begin{document}
\maketitle

\begin{abstract}

In this paper we consider the Parisian ruin probabilities for the dual risk model in a discrete-time setting. By exploiting the strong Markov property of the risk process we derive a recursive expression for the finite-time Parisian ruin probability, in terms of classic discrete-time dual ruin probabilities. Moreover, we obtain an explicit expression for the corresponding infinite-time Parisian ruin probability as a limiting case. In order to obtain more analytic results, we employ a conditioning argument and derive a new expression for the classic infinite-time ruin probability in the dual risk model and hence, an alternative form of the infinite-time Parisian ruin probability. Finally, we explore some interesting special cases, including the Binomial/Geometric model, and obtain a simple expression for the Parisian ruin probability of the Gambler's ruin problem.
\end{abstract}

{\bf Keywords:} Dual risk model, Discrete-time, Ruin probabilities, Parisian ruin, Binomial/Geometric Model, Parisian Gambler's Ruin.

\section{Introduction}

The compound binomial model, first proposed by Gerber (1988), is a discrete-time analogue of the classic Cram\'er-Lundberg risk model which provides a more realistic analysis to the cash flows of an insurance firm. The model has attracted attention since its introduction due to the recursive nature of the results, which are readily programmable in practise, and as a tool to approximate the continuous-time risk model as a limiting case (for details see Dickson (1994)). In the compound binomial risk model, it is assumed that income is received via a periodic premium of size one, whilst the initial reserve and the claim amounts are assumed to be integer valued. That is, the reserve process of an insurer, denoted $\{R_n\}_{n \in \mathbb{N}}$, is given by
\begin{linenomath*}
\begin{equation}
\label{eqCDTRM1}
R_n=u+n-X_n,
\end{equation}
\end{linenomath*}
where $u\in \mathbb{N}$ is the insurers initial reserve and
\begin{linenomath*}\begin{equation}
\label{eqClaims}
X_n=\sum_{i=1}^n Y_i, \quad n=1,2,3, \ldots,
\end{equation}\end{linenomath*}
denotes the aggregate claim amount up to period $n\in \mathbb{N}$, with $X_0=0$. Further, it is assumed that the random non-negative claim amounts, namely $Y_i$, $i=1,2,\ldots$, are independent and identically distributed (i.i.d.) random variables with probability mass function (p.m.f.) $p_k=\mathbb{P}(Y=k)$, for $k=0,1,2, \ldots$, and finite mean $\mathbb{E}(Y_1)<\infty$. We point out, due to its importance in the following, that the claim amounts $Y_i$, $i=1,2,\ldots$ have a mass point at zero with probability $p_0>0$.

Let $T$ denote the time to ruin for the discrete-time risk model given in Eq. \eqref{eqCDTRM1}, defined by
\begin{linenomath*}\begin{equation*}
\label{eqTimeRuin}
T=\inf \{n\in \mathbb{N}: R_n \leqs 0\},
\end{equation*}\end{linenomath*}
where $T=\infty$ if $R_n>0$ for all $n\in \mathbb{N}$. Note that this definition is consistent with Gerber (1988), whilst other authors define the ruin time when the reserve takes strictly negative values (see e.g.\,Willmot (1993)). Then, the finite-time ruin probability, from initial reserve $u \in \mathbb{N}$, is defined by
\begin{linenomath*}\begin{equation*}
\label{CDTRuin}
\psi(u,t)=\mathbb{P}(T< t \, \big| R_0=u), \qquad t\in \mathbb{N},
\end{equation*}\end{linenomath*}
 with corresponding finite-time survival probability $\phi(u,t)=1-\psi(u,t)$. The finite-time ruin probability of the discrete-time risk model was first studied in Willmot (1993), where explicit formulas are derived using generating functions. Later, Lef\`evre and Loisel (2007) derive a seal-type formula based on the ballot theorem (see Tak\'acs (1962)) and a Picard-Lef\`evre-type formula for the corresponding finite-time survival probability, namely $\phi(u,t)$. For further results on finite-time probabilities see Li and Sendova (2013) and references therein. The finite-time ruin probabilities, in general, prove difficult to tackle and the literature on the subject remains few.

On the other hand, the infinite-time ruin probability, defined as the limiting case i.e. $\psi(u)=\lim_{t\rightarrow \infty} \psi(u,t)$, has been considered by several authors e.g. Gerber (1988), Michel (1989), Shiu (1989) and Dickson (1994), among others, where numerous alternative methods have been employed to derive explicit expressions. Further references for related results such as; the discounted probability of ruin, the deficit and surplus prior to ruin and the well known Gerber-Shiu function, to name a few, can be found in Cheng et al.\,(2000), Cossette et al.\,(2003, 2004, 2006), Dickson (1994), Li and Garrido (2002), Pavlova and Willmot (2004), Wu and Li (2009) and Yuen and Guo (2006). For a full comprehensive review of the discrete-time literature refer to Li et al.\,(2009), and references therein.

One limitation of the discrete-time risk model \eqref{eqCDTRM1}, as pointed out by Avanzi et al.\,(2007), is that depending on the line of business there are companies which are subject to a constant flow of expenses and receive income/gains as random events. For instance, pharmaceutical or petroleum companies, where the random gains come from new invention or discoveries, require an alternative to the compound binomial risk model such that the reserve process, namely $\{R^*_n\}_{n\in \mathbb{N}}$, is defined by
\begin{linenomath*}\begin{equation}
\label{eqDTDRP}
R^*_n=u-n+X_n,
\end{equation}\end{linenomath*}
where $\{X_k\}_{k\in\mathbb{N}^+}$ has the same form as Eq.\,\eqref{eqClaims}. This model is known as the \textit{discrete-time dual risk model}. The continuous analogue of the dual risk model has been considered by various authors, with the majority of focus in dividend problems (see Avanzi et al.\,(2007), Bergel et al.\,(2016), Cheung and Drekic (2008), Ng (2009) and references therein). Additionally, Albrecher et al.\,(2008) considered the continuous-time dual risk model under a loss-carry forward tax system, where, in the case of exponentially distributed jump sizes, the infinite-time ruin probability is derived in terms of the ruin probability without taxation.  However, the dual risk problem in discrete-time remains to be studied.

For convenience, throughout the remainder of this paper, we use the notation $\mathbb{P}(\cdot \,| R^*_0=u) = \mathbb{P}_u(\cdot)$ and $\mathbb{P}_0(\cdot)=\mathbb{P}(\cdot)$.

 The finite-time ruin probability, for the dual risk process given in Eq.\,\eqref{eqDTDRP}, is defined in a similar way to the discrete-time risk model defined in Eq.\,\eqref{eqCDTRM1}. That is, the finite-time ruin probability is defined as the probability that the risk reserve process $\{R^*_n\}_{n \in \mathbb{N}}$ attains a non-positive level before some pre-specified time horizon $t\in \mathbb{N}$, from initial capital $u \in \mathbb{N}$. Since the reserve process for the dual risk model, defined in Eq.\,\eqref{eqDTDRP}, experiences deterministic losses of one per period, it follows that the probability of experiencing a non-positive level is equivalent to the probability of hitting the zero level. Thus, let us denote the time to ruin for the dual risk model, given in Eq.\,\eqref{eqDTDRP}, by $\tau^*$, defined by
\begin{linenomath*}\begin{equation*}
\label{eqTimeRuinDual}
\tau^*=\inf \{ n \in \mathbb{N}  : R^*_n=0 \}.
\end{equation*}\end{linenomath*}
 Then, the finite-time dual ruin probability is given by
\begin{linenomath*}\begin{equation}
\label{eqRuinDual}
\psi^*(u,t)=\mathbb{P}(\tau^* < t\, \big| R^*_0=u),
\end{equation}\end{linenomath*}
with the infinite-time dual ruin probability, as above, defined as the limiting case i.e. $\psi^*(u)=\lim_{t\rightarrow \infty} \psi^*(u,t)$. It is clear  that $\tau^* \geqs u$ (due to the deterministic losses of one per period). Finally, it is assumed that the net profit condition holds i.e. $\mu=\mathbb{E}(Y_1)>1$, such that $R^*_n \rightarrow +\infty$ as $n\rightarrow \infty$. This condition ensures that the dual ruin probability is not certain.

The aim of this paper is to extend the notion of ruin to the so-called Parisian ruin, which occurs if the process $\{R^*_n\}_{n\in \mathbb{N}}$ is strictly negative for a fixed number of periods $r \in \{1,2, \ldots \}$ and derive recursive and explicit expressions for the Parisian ruin probability in finite and infinite-time. The idea of Parisian ruin follows from Parisian stock options, where prices are activated or cancelled when underlying assets stay above or below a barrier long enough (see Chesney et al.\,(1997) and Dassios and Wu (2009)). The time of Parisian ruin, in the discrete-time dual risk model, is defined as
\begin{linenomath*}\begin{equation*}
\label{eqParRuinTime}
\tau^r = \inf \{ n \in \mathbb{N} : n - \sup\{ s< n : R^*_s = -1, R^*_{s-1}=0 \}=r \in \mathbb{N}^+, \, R^*_n < 0 \},
\end{equation*}\end{linenomath*}
with finite and infinite-time Parisian ruin probabilities defined by
\begin{linenomath*}\begin{equation*}
\label{eqParRuinDual}
\psi^*_r(u,t)=\mathbb{P}_u(\tau^r < t),
\end{equation*}\end{linenomath*}
and
\begin{linenomath*}\begin{equation*}
\label{eqParRuinDual1}
\psi^*_r(u)=\lim_{t\rightarrow \infty} \psi^*_r(u,t),
\end{equation*}\end{linenomath*}
respectively. We further define the corresponding finite and infinite-time Parisian survival probabilities by $\phi^*_r(u,t)= \mathbb{P}_u(\tau^r \geqs t)=1-\psi^*_r(u,t)$ and $\phi^*_r(u)= 1-\psi^*_r(u)$.

 The extension from classical ruin to Parisian ruin was first proposed, in a continuous time setting, by Dassios and Wu (2011) for the compound Poisson risk process with exponential claim sizes. In this setting they derive expressions for the Laplace transform of the time and probability of Parisian ruin. Further, Czarna and Palmolski (2011) and Loeffen et al.\,(2013) have derived results for the Parisian ruin in the more general case of spectrally negative L\'evy processes. More recently, Czarna et al.\,(2016) adapted the Parisian ruin problem to a discrete-time risk model, as in Eq.\,\eqref{eqCDTRM1}, where finite and infinite-time expressions for the ruin probability are derived, along with the light and heavy-tailed asymptotic behaviour.

 The paper is organised as follows. In Section 2, we exploit the strong Markov property of the risk process to derive a recursive formula for the finite-time Parisian ruin probability, with general initial reserve, in terms of the dual ruin probability defined in Eq.\,\eqref{eqRuinDual} and the Parisian ruin probability with zero initial reserve. For the latter risk quantity, we show this can be calculated recursively. In Section 3, we obtain a similar expression for the corresponding infinite-time Parisian ruin probability, where the Parisian ruin probability with zero initial reserve has an explicit form. In Section 4, we consider an alternative method for calculating the infinite-time dual ruin probability. In Section 5, in order to illustrate the applicability of our recursive type equation, we analyse the Binomial/Geometric model, as a special case. Finally, in Section 6, we derive an explicit expression for the Parisian ruin probability to the well known Gambler's ruin problem.
\section{Finite-time Parisian ruin probability}
\label{SecPARRuin}

In this section, we derive an expression for the finite-time Parisian survival probability, $\phi^*_r(u,t)$, for the dual risk model given in Eq.\,\eqref{eqDTDRP}, for general initial reserve $u \in \mathbb{N}$.

First note that, since the dual risk process, $\{R^*_n\}_{n\in \mathbb{N}}$, experiences only positive random gains and losses occur at a rate of one per period, it follows that $\phi^*_r(u,t)=1$, when $t \leqs u + r +1$. Now, for $t > u + r +1$, by conditioning on the time to ruin, namely $\tau^*$, using the strong Markov property and the fact that $\mathbb{P}_u(\tau^* = k)=0$ for $k< u$, we have

\begin{linenomath*}\begin{equation}
\label{eqFinR1}
\phi^*_r(u,t)=\sum_{k=u}^{t-r-2} \mathbb{P}_u(\tau^* =k)\phi^*_r(0,t-k) + \phi^*(u,t-r-1).
\end{equation}\end{linenomath*}

\noindent Note that the finite-time dual survival probability is given by $\phi^*(u,t)=1-\psi^*(u,t)=1-\sum_{k=0}^{t-1} \mathbb{P}_u(\tau^*=k)$. Thus, from the form of Eq.\,\eqref{eqFinR1}, in order to obtain an expression for the Parisian survival probability, $\phi^*_r(u,t)$, we need only to derive expressions for $ \mathbb{P}_u(\tau^* =k)$ and the Parisian survival probability with zero initial reserve, namely $\phi^*_r(0,t)$.

\begin{Lemma}
\label{Lem2}
In the discrete-time dual risk model, the probability of hitting the zero level from initial capital $u\in \mathbb{N}$, in $n \in \mathbb{N}$ periods, namely $\mathbb{P}_u(\tau^*=n)$, is given by
\begin{linenomath*}\begin{equation}
\label{eqCLRuin}
\mathbb{P}_u(\tau^*=n) =  \frac{u}{n}p_{n-u}^{*n}, \quad n \geqs u,
\end{equation}\end{linenomath*}
where $\{ p_k^{*n} \}_{n \in \mathbb{N}}$ denotes the $n$-th fold convolution of $Y_1$.
\end{Lemma}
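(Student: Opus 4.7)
The plan is to reformulate the event $\{\tau^*=n\}$ as a first-passage event for a downward-skip-free random walk and then invoke the classical cycle lemma (equivalently, Kemperman's formula, or the ballot/Takács result already cited in the paper).

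First I would rewrite the condition $R^*_n=0$ as $X_n=n-u$. Because the process decreases by at most one per period (since $R^*_k-R^*_{k-1}=Y_k-1\geqs -1$), hitting $0$ from a positive integer level forces $R^*_k\geqs 1$ for all $k<\tau^*$. Setting $W_k:=X_k-k=\sum_{i=1}^k(Y_i-1)$, this is a random walk with i.i.d.\ increments in $\{-1,0,1,2,\ldots\}$, and $\{\tau^*=n\}$ becomes the event that $W$ first reaches $-u$ at time $n$, i.e.\ $W_n=-u$ and $W_k>-u$ for $1\leqs k<n$.

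Next I would apply the cycle lemma to the step vector $(Y_1-1,\ldots,Y_n-1)$: conditional on $W_n=-u$ (equivalently $X_n=n-u$), among the $n$ cyclic shifts of any fixed realization of this vector, exactly $u$ produce a path whose partial sums first attain $-u$ at step $n$. Since the law of $(Y_1,\ldots,Y_n)$ is invariant under cyclic permutation (the $Y_i$ are i.i.d.), this gives
\begin{linenomath*}\begin{equation*}
\mathbb{P}_u(\tau^*=n)=\frac{u}{n}\,\mathbb{P}(X_n=n-u)=\frac{u}{n}\,p_{n-u}^{*n},
\end{equation*}\end{linenomath*}
which is the claimed formula. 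The restriction $n\geqs u$ is automatic since $\tau^*\geqs u$.

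The main obstacle is justifying the cycle-lemma step cleanly: one must verify that the downward-skip-free property of $W$ is exactly what makes the count of "good" cyclic shifts equal to $u$, rather than some more delicate quantity, and that the exchangeability argument correctly averages over the $n$ shifts to yield the factor $u/n$. An alternative, if one prefers to avoid quoting the cycle lemma directly, is to use the Takács ballot-theorem formulation referenced earlier in the introduction (via Lefèvre and Loisel (2007)), which gives the same identity for first-passage probabilities of walks with unit downward jumps; this would be a drop-in replacement but would still rest on the same combinatorial content.
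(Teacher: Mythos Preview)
Your proposal is correct and follows essentially the same route as the paper: both recast $\{\tau^*=n\}$ as a first-passage event for the skip-free walk $S^*_n=n-X_n$ (your $W_k=-S^*_k$) and then invoke the ballot/Tak\'acs identity. The paper simply cites Proposition~3.1 of Li and Sendova (2013) for that last step, whereas you sketch the cycle-lemma justification directly; the underlying argument is the same.
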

\begin{proof}
Consider the discrete-time dual risk process $\{R^*_n\}_{n\in \mathbb{N}}$, defined in Eq.\,\eqref{eqDTDRP}, where
\begin{linenomath*}\begin{equation}
\label{eqSn}
R^*_n=u-S^*_n,
\end{equation}\end{linenomath*}
with $S^*_n=n-X_n$. The `increment' process, $\{S^*_n\}_{n\in \mathbb{N}}$, is equivalent to a discrete-time risk process, given by Eq.\,\eqref{eqCDTRM1}, with initial capital $S^*_0=0$. Therefore, it follows that the dual ruin time, $\tau^*$, is equivalent to the hitting time for the incremental process, $\{S^*_n\}_{n\in \mathbb{N}}$, of the level $u \in \mathbb{N}$ (see Fig:\ref{fig:Example}). Using Proposition 3.1 of Li and Sendova (2013), the result follows.

\begin{figure}
\centering
\begin{subfigure}{.5\textwidth}
  \centering
  \includegraphics[width=7.5cm]{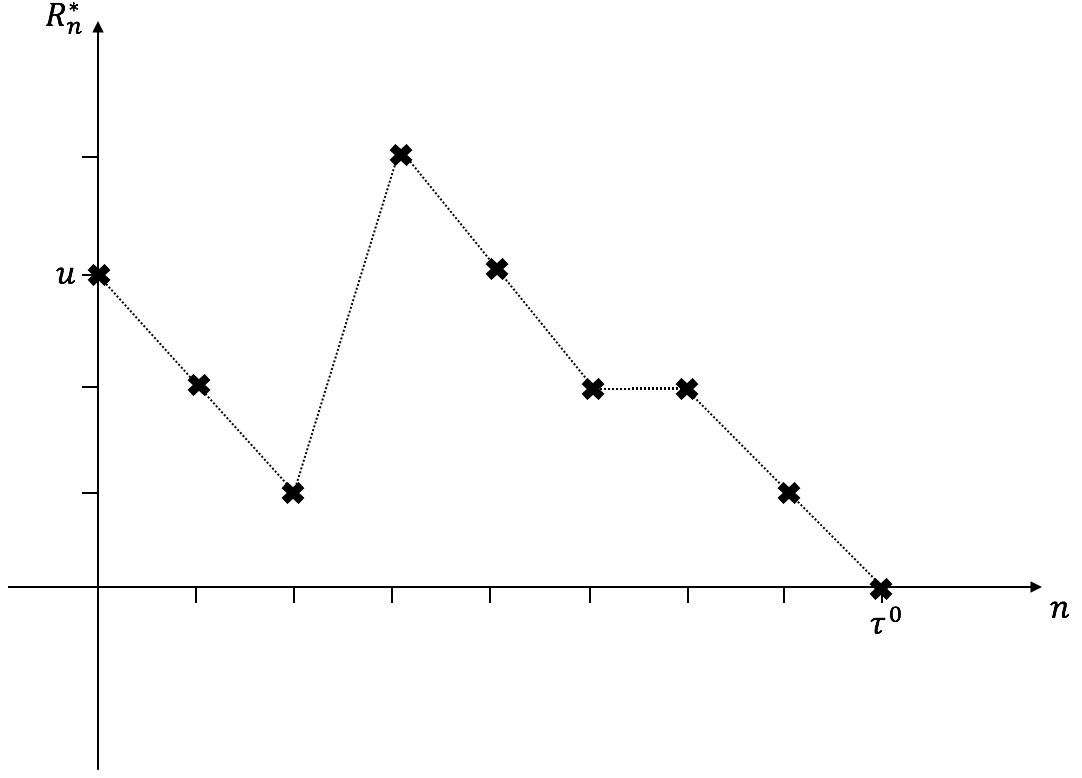}
  \caption{Typical sample path of reserve process\\ $R^*_n$ with initial capital $u\in \mathbb{N}$.}
  \label{fig:sub1}
\end{subfigure}%
\begin{subfigure}{.5\textwidth}
  \centering
  \includegraphics[width=7.5cm]{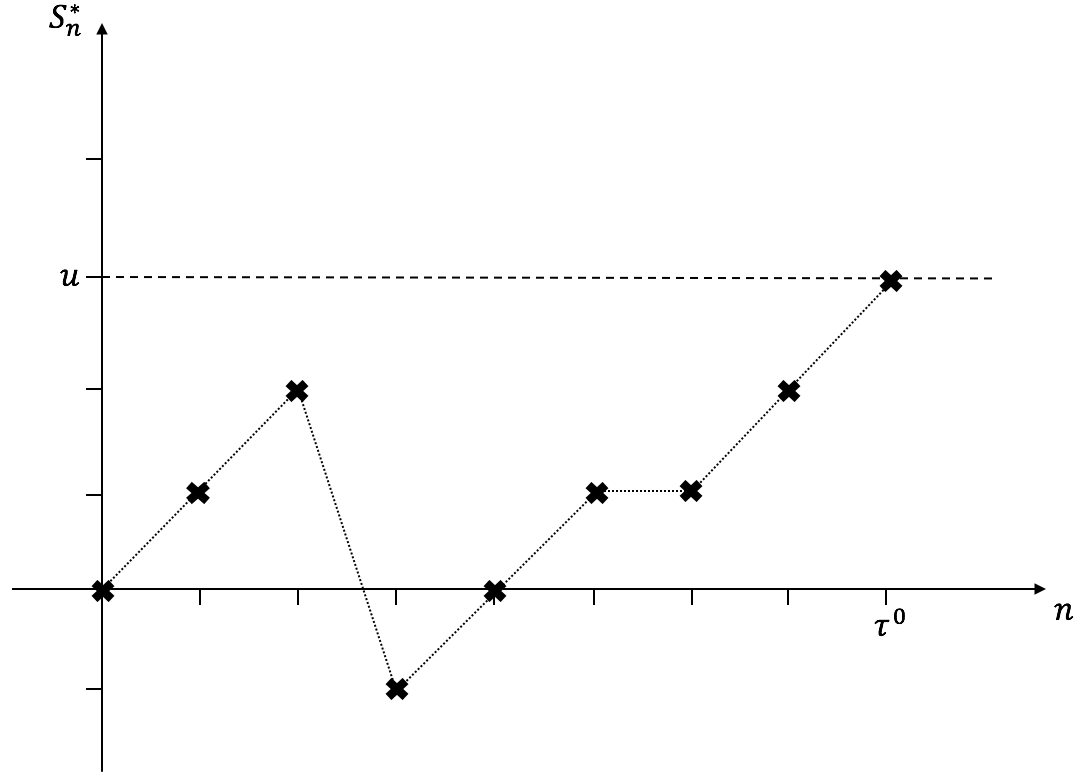}
  \caption{Corresponding sample path of the increment process $S^*_n$ with initial capital $0$.}
  \label{fig:sub2}
\end{subfigure}
\caption{Equivalence between dual risk process and classic risk process.}
\label{fig:Example}
\end{figure}
\end{proof}

Now that we have an expression for $\mathbb{P}_u(\tau^*=k)$, $k\in \mathbb{N}$, and consequently for the finite-time dual survival probability, namely $\phi^*_r(u,t)$, it remains to derive an expression for the finite-time Parisian survival probability for the case where the initial reserve is zero i.e. $R^*_0=0$. Before we begin with deriving an expression for $\phi^*_r(0,t)$, note that in order to avoid Parisian ruin, once the reserve process becomes negative, it will be necessary to return to the zero level (or above) in $r$ time periods or less. Considering this observation, we will introduce another random stopping time, which we name `recovery' time, that measures the number of periods it takes to recover from a deficit to a non-negative reserve. Let us denote the recovery time by $\tau^-$, defined by
\begin{linenomath*}\begin{equation*}
\tau^-=\inf \{ n \in \mathbb{N} : R^*_n \geqs 0, R^*_s < 0 , \forall s < n \}.
\end{equation*}\end{linenomath*}

\noindent Now, consider the dual risk reserve process defined in Eq.\,\eqref{eqDTDRP}, with initial capital $u=0$. If no gain occurs in the first period of time, the risk reserve becomes $R^*_1=-1$ at the end of the period. On the other hand, if there is a random gain of amount $k\in\mathbb{N}^+$ in the first period, the risk reserve becomes $R^*_1=k-1$. Hence, by the law of total probability, we obtain a recursive equation for the finite-time Parisian survival probability, with initial capital zero i.e. $\phi^*_r(0,n)$ (where $n>r+1$), of the form

\begin{linenomath*}\begin{align}
\label{eqFinR2}
\phi^*_r(0,n)&=p_0\, \phi^*_r(-1, n-1)+ \sum_{k=1}^{\infty} p_k \phi^*_r(k-1,n-1) \notag \\
&=p_0 \sum_{s=1}^r \sum_{z=0}^{\infty} \mathbb{P}_{-1}(\tau^- = s, R^*_{\tau^-}=z)\phi^*_r(z,n-s-1) + \sum_{k=0}^{\infty} p_{k+1} \phi^*_r(k,n-1),
\end{align}\end{linenomath*}



\noindent where $\mathbb{P}_{-1}(\tau^- = \cdot, R^*_{\tau^-}=\cdot)$ is the joint density of the recovery time and the size of the overshoot at recovery, given initial capital $u=-1$.



In order to complete the above expression for $\phi^*_r(0,n)$, we need first to derive an expression for $\mathbb{P}_{-1}(\tau^- = \cdot, R^*_{\tau^-}=\cdot)$, which is given in the following Lemma.

\begin{Lemma}
\label{Lem1}
For, $n\in \mathbb{N}^+$ and $k\in \mathbb{N}$, the joint distribution of the recovery time and the overshoot at recovery is given by
\begin{linenomath*}\begin{align}
\label{eqProp1}
\mathbb{P}_{-1}(\tau^- = n, R^*_{\tau^-}=k)= \sum_{j=0}^{n-1}\,  p_j^{*(n-1)}p_{1+n-j+k} -\sum_{j=2}^{n-1} \sum_{i=2}^j \frac{n-j}{n-i} p_{j-i}^{*(n-i)}p_i^{*(i-1)} p_{1+n-j+k},
\end{align}\end{linenomath*}
\end{Lemma}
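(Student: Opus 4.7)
The plan is to first condition on the position of the walk at the penultimate step. Since under $\mathbb{P}_{-1}$ we have $R^*_s = X_s - s - 1$ for all $s \geq 0$, the event $\{\tau^- = n,\, R^*_{\tau^-} = k\}$ is equivalent to $\{X_s \leq s \text{ for all } s \leq n-1,\, X_n = n+1+k\}$. Setting $j := X_{n-1} \in \{0, \ldots, n-1\}$ determines the last jump as $Y_n = n+1+k-j$, so that, introducing
\[
Q(j, N) := \mathbb{P}\bigl(X_s \leq s \text{ for all } s \leq N,\, X_N = j\bigr),
\]
we obtain
\[
\mathbb{P}_{-1}(\tau^- = n,\, R^*_{\tau^-} = k) = \sum_{j=0}^{n-1} Q(j, n-1)\, p_{1+n-j+k}.
\]
Comparing with the claimed identity, it suffices to show
\[
Q(j, n-1) = p_j^{*(n-1)} - \sum_{i=2}^{j} \frac{n-j}{n-i}\, p_{j-i}^{*(n-i)}\, p_i^{*(i-1)}.
\]

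To establish this, I would start from the unconstrained mass $p_j^{*(n-1)}$ and subtract the probability of paths that violate the diagonal constraint, decomposing the violating paths by the \emph{last} time the walk strictly exceeds the diagonal, $\sigma' := \max\{s \leq n-1 : X_s \geq s+1\}$ (with $\sigma' = -\infty$ if no such $s$). Because $X$ is non-decreasing and $\sigma'$ is maximal (so $X_{\sigma'+1} \leq \sigma' + 1$ when $\sigma' \leq n-2$, while $X_{\sigma'} \geq \sigma' + 1$), the values are pinched and $X_{\sigma'} = X_{\sigma' + 1} = \sigma' + 1$, which forces $Y_{\sigma' + 1} = 0$. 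Writing $i = \sigma' + 1$, a violating path then splits into three independent pieces: the first $i - 1$ jumps summing to $i$ (mass $p_i^{*(i-1)}$), the null jump $Y_i = 0$ (mass $p_0$), and the remaining $n - 1 - i$ jumps, which form a walk from $0$ to $j - i$ subject to the same diagonal constraint (mass $Q(j - i, n - 1 - i)$). Summing over admissible indices (the value $i = 1$ is excluded since $X_0 = 0 < 1$, and the terms with $i > j$ vanish because $Q(j - i, n - 1 - i) = 0$) yields the recursion
\[
Q(j, n-1) = p_j^{*(n-1)} - p_0 \sum_{i=2}^{j} p_i^{*(i-1)}\, Q(j - i, n - 1 - i).
\]

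The final ingredient is the closed form
\[
p_0\, Q(k, N) = \frac{N + 1 - k}{N + 1}\, p_k^{*(N+1)},
\]
which I would derive via an auxiliary-walk trick combined with Tak\'acs' ballot theorem, the same tool underlying Lemma \ref{Lem2}. Let $\tilde{X}_s = \tilde{Y}_1 + \cdots + \tilde{Y}_s$ be a walk of length $N+1$ whose increments are i.i.d.~copies of $Y$. The ballot theorem gives
\[
\mathbb{P}\bigl(\tilde{X}_s < s \text{ for all } s \leq N+1,\, \tilde{X}_{N+1} = k\bigr) = \frac{N + 1 - k}{N + 1}\, p_k^{*(N+1)}.
\]
On the other hand, the constraint $\tilde{X}_1 < 1$ forces $\tilde{Y}_1 = 0$ (non-negative integer), and conditional on this, the walk $\tilde{X}_2, \ldots, \tilde{X}_{N+1}$ is distributed as $X_1, \ldots, X_N$ with the residual event reducing exactly to $\{X_t \leq t \text{ for all } t \leq N,\, X_N = k\}$, of probability $Q(k, N)$. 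Substituting this closed form into the recursion cancels $p_0$ cleanly and yields the claimed expression for $Q(j, n-1)$; plugging back into the conditioning identity then gives the lemma. The main obstacle I anticipate is the last-exceedance decomposition, specifically the short combinatorial pinching argument showing $X_{\sigma'} = X_{\sigma'+1} = \sigma' + 1$, which is precisely what isolates the factor $p_0$ needed for the ballot-theorem identity to apply.
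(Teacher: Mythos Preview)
Your argument is correct. The conditioning on the penultimate position is sound, the last-exceedance decomposition and the pinching argument $X_{\sigma'}=X_{\sigma'+1}=\sigma'+1$ are exactly right (and crucially force the factor $p_0$), and your auxiliary-walk derivation of $p_0\,Q(k,N)=\frac{N+1-k}{N+1}\,p_k^{*(N+1)}$ from Tak\'acs' ballot theorem is the standard shift-by-one trick. Substituting this into the recursion indeed yields $Q(j,n-1)=p_j^{*(n-1)}-\sum_{i=2}^{j}\frac{n-j}{n-i}\,p_{j-i}^{*(n-i)}p_i^{*(i-1)}$, and the lemma follows.

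Your route, however, differs from the paper's. The paper does not derive the formula at all: it observes that the reflected process $-R^*_n$, started from $1$, is a compound binomial risk process in the sense of Eq.~\eqref{eqCDTRM1}, so that $(\tau^-,R^*_{\tau^-})$ coincides in law with (ruin time, deficit at ruin) for that process with initial reserve $1$, and then simply cites Lemma~2 of Czarna et al.~(2016) with $u=1$. In effect the paper outsources the combinatorics to that reference, while you supply a complete self-contained proof built on Tak\'acs' ballot theorem (the same device behind Lemma~\ref{Lem2}). The reflection step you perform implicitly when you rewrite the event in terms of $X_s\le s$, so the two approaches share that opening move; what you add is an explicit derivation---the last-exceedance decomposition giving the recursion, and the closed form $p_0\,Q(k,N)=\frac{N+1-k}{N+1}p_k^{*(N+1)}$---which makes the structure of the formula transparent without appeal to external results.
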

\begin{proof}
Consider the reflected discrete-time dual risk process, $\{-R^*_n\}_{n \in \mathbb{N}}$, where $\{R^*_n\}_{n \in \mathbb{N}}$ is given in Eq.\,\eqref{eqDTDRP}, with initial capital $u=-1$. Then, it follows that the distribution of the time to cross the time axis and the overshoot of the process at this hitting time are equivalent for both $\{R^*_n\}_{n \in \mathbb{N}}$ and its reflected process $\{-R^*_n\}_{n \in \mathbb{N}}$, which can be described by a discrete-time risk process given in Eq.\,\eqref{eqCDTRM1} (see Fig:\,\ref{fig:Example1}).  Thus, the joint distribution $\mathbb{P}_{-1}(\tau^- = n, R^*_{\tau^-}=k)$ can be found by employing the discrete ruin related quantity from Lemma 2 of Czarna et al.\,(2016). That is, by setting $u=1$ in Eq.\,(4) of Czarna et al.\,(2016), the result follows.
\begin{figure}
\centering
\begin{subfigure}{.5\textwidth}
  \centering
  \includegraphics[width=7.5cm]{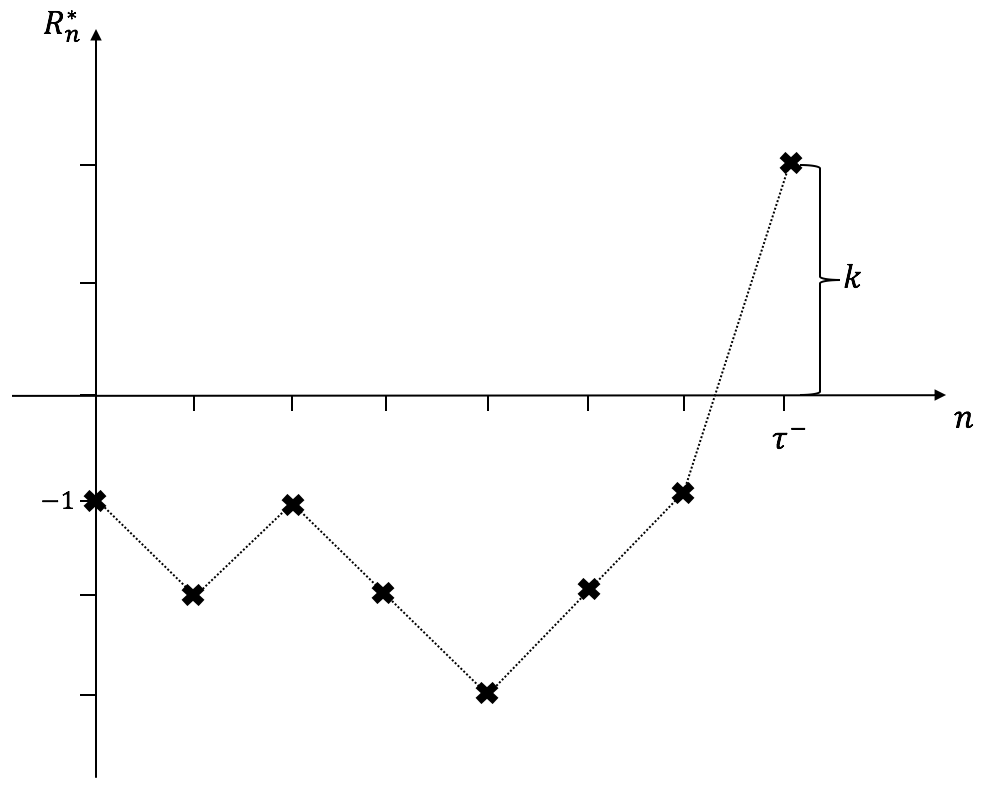}
  \caption{Typical sample path of risk reserve process\\ $R^*_n$ with initial capital $u=-1$.}
  \label{fig:sub1}
\end{subfigure}%
\begin{subfigure}{.5\textwidth}
  \centering
  \includegraphics[width=7.5cm]{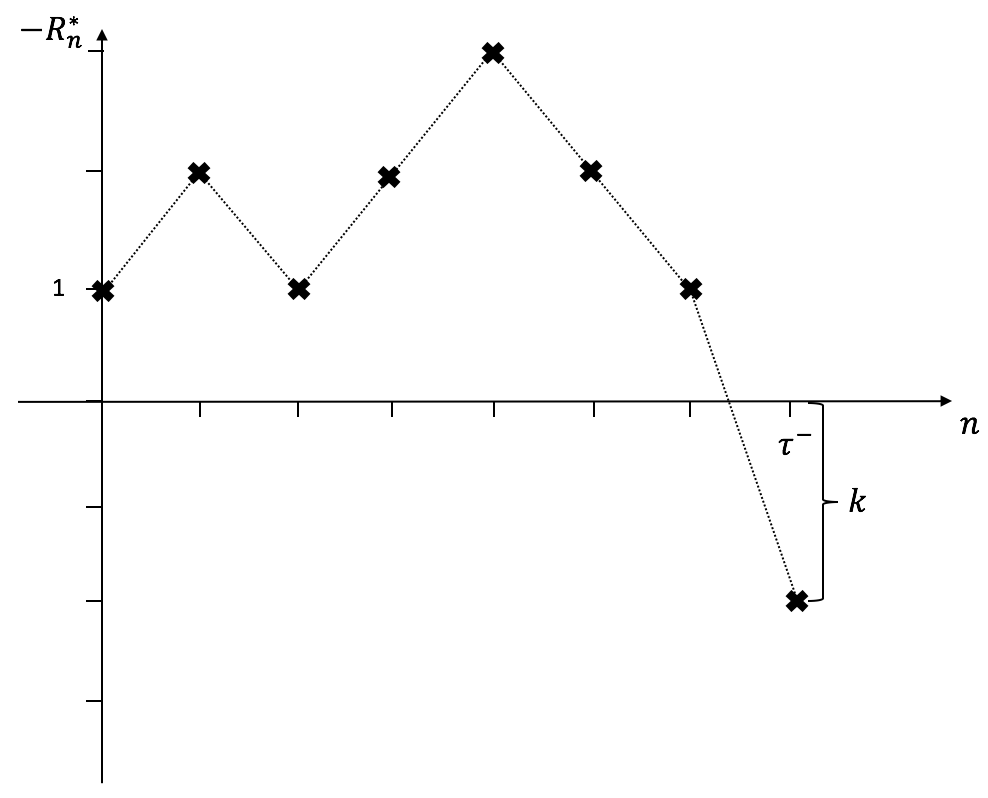}
  \caption{Sample path of the reflected risk reserve process $-R^*_n$ with initial capital $u=1$.}
  \label{fig:sub2}
\end{subfigure}
\caption{Equivalence between original and reflected risk processes.}
\label{fig:Example1}
\end{figure}
\end{proof}

Finally, substituting the form of $\phi^*_r(u,t)$, given in Eq.\,\eqref{eqFinR1}, into Eq.\,\eqref{eqFinR2}, we obtain an expression for $\phi^*_r(0,n)$, of the form

\begin{linenomath*}\begin{align}
\label{eqFinR6}
\phi^*_r(0,n)&=p_0\sum_{s=1}^r \sum_{z=0}^\infty \mathbb{P}_{-1}(\tau^-=s, R^*_{\tau^-}=z) \phi^*(z, n-s-r-2) \notag \\
&\hspace{10mm}+ p_0\sum_{s=1}^r \sum_{z=0}^\infty \sum_{i=z}^{n-s-r-3}\mathbb{P}_{-1}(\tau^-=s, R^*_{\tau^-}=z)\mathbb{P}_z(\tau^*=i) \phi^*_r(0, n-s-i-1) \notag \\
&\hspace{20mm}+\sum_{k=0}^\infty p_{k+1} \phi^*(k, n-r-2) + \sum_{k=0}^\infty \sum_{i=k}^{n-r-3} p_{k+1} \mathbb{P}_k(\tau^*=i) \phi^*_r(0, n-i-1).
\end{align}\end{linenomath*}

\begin{Remark}
\rm An explicit expression for $\phi^*_r(0,n)$, based on Eq.\,\eqref{eqFinR6}, proves difficult to obtain. However, due to the form of Eq.\,\eqref{eqFinR6}, a recursive calculation for $\phi^*_r(0,n)$ is given by the following algorithm:  \\

\noindent \textbf{Step 1.} For $n=r+2$, in Eq.\,\eqref{eqFinR6}, and using the fact that $\phi^*(u,t)=1$ for $t\leqs u$, we have that
\begin{linenomath*}\begin{align*}
\phi^*_r(0,r+2)&=p_0\sum_{s=1}^r \sum_{z=0}^\infty \mathbb{P}_{-1}(\tau^-=s, R^*_{\tau^-}=z)+1-p_0\\
&=1-p_0\left(1- \sum_{z=0}^\infty \mathbb{P}_{-1}(\tau^- \leqs r, R^*_{\tau^-}=z)\right)\\
&=1-p_0\phi(1,r+1),
\end{align*}\end{linenomath*}
where $\phi(u,t)$ is the classic finite-time survival probability in the compound binomial risk model, which has been extensively studied in the literature, [see Li and Sendova (2013) and references therein] and alternatively can be evaluated using Lemma \ref{Lem1}. \\

\noindent \textbf{Step 2.} Based on the result of step 1, we can compute the following term, i.e. for $n=r+3$, we have
\begin{linenomath*}\begin{align*}
\phi^*_r(0,r+3)&=p_0\sum_{s=1}^r \sum_{z=0}^\infty \mathbb{P}_{-1}(\tau^-=s, R_{\tau^-}=z)+\sum_{k=1}^\infty p_{k+1}+p_1\phi^*_r(0,r+2)\\
&=1-(1+p_1)p_0\phi(1,r+1).
\end{align*}\end{linenomath*}

\noindent \textbf{Step 3.} For $n=r+4$, we have
\begin{linenomath*}\begin{align*}
\phi^*_r(0,r+4)&=p_0\left(\sum_{z=1}^\infty \mathbb{P}_{-1}(\tau^-=1, R_{\tau^-}=z)+\sum_{s=2}^r \sum_{z=0}^\infty \mathbb{P}_{-1}(\tau^-=s, R_{\tau^-}=z)\right) \\
&\hspace{10mm}+p_0\mathbb{P}_{-1}(\tau^-=1, R_{\tau^-}=0)\phi^*_r(0,r+2)+p_2\phi^*(1,2)+\sum_{k=2}^\infty p_{k+1} \\
&\hspace{20mm}+p_1\phi^*_r(0,r+3)+p_2\mathbb{P}_1(\tau^*=1)\phi^*_r(0,r+2)\\
&=p_0\left(\psi(1,r+1)-\mathbb{P}_{-1}(\tau^-=1, R_{\tau^-}=0)\right) \\
&\hspace{10mm}+p_0\mathbb{P}_{-1}(\tau^-=1, R_{\tau^-}=0)\phi^*_r(0,r+2)+p_2\left(1-p_0\right)+1-(p_0+p_1+p_2) \\
&\hspace{20mm}+p_1\phi^*_r(0,r+3)+p_2p_0\phi^*_r(0,r+2).
\end{align*}
Employing the results of steps 1 and 2 and using the fact that $\mathbb{P}_{-1}(\tau^-=1, R_{\tau^-}=0)=p_2$, by Lemma \ref{Lem1}, after some algebraic manipulations we obtain
\begin{align*}
\phi^*_r(0,r+4)&=1-\left[1+2p_0p_2+p_1(1+p_1)\right]p_0\phi(1,r+1).
\end{align*}\end{linenomath*}

\noindent Thus, based on the above steps, it can be seen that $\phi_r^*(0,r+k)$, for $k=2,3,\ldots$, can be evaluated recursively for each value of $k$ in terms of the mass functions, $p_k$, and the classic ruin quantity $\phi(1,r+1)$.
\end{Remark}

\begin{Theorem} For $u\in \mathbb{N}$, the finite-time Parisian ruin probability $\psi^*_r(u,t)=0$ for $ t\leqs u+r+1$ and for $t>u+r+1$, is given by
\begin{linenomath*}\begin{equation}
\label{eqThm1b}
\psi^*_r(u,t)=\sum_{k=u}^{t-r-2} \mathbb{P}_u(\tau^* =k)\psi^*_r(0,t-k),
\end{equation}\end{linenomath*}
where $\mathbb{P}_u(\tau^* =k)$ is given in Lemma \ref{Lem2} and the initial value $\psi^*_r(0,n)$ can be found recursively from Eq.\,\eqref{eqFinR6}.
\end{Theorem}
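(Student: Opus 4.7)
The plan is to derive the identity for $t>u+r+1$ directly from the survival-probability decomposition \eqref{eqFinR1} by passing from $\phi$ to $\psi=1-\phi$, and to dispose of the case $t\leqs u+r+1$ by a simple deterministic argument. For the latter, since the dual process loses exactly one unit per period in the absence of a random gain, starting from $R^*_0=u$ the reserve cannot reach $-1$ before time $u+1$, and then must remain strictly below zero for a further $r$ periods before Parisian ruin can be declared. Hence $\tau^r\geqs u+r+1$ almost surely, and consequently $\psi^*_r(u,t)=\mathbb{P}_u(\tau^r<t)=0$ whenever $t\leqs u+r+1$.

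For $t>u+r+1$, I would begin from the already-established formula
\[
\phi^*_r(u,t)=\sum_{k=u}^{t-r-2}\mathbb{P}_u(\tau^*=k)\phi^*_r(0,t-k)+\phi^*(u,t-r-1),
\]
and substitute $\phi^*_r(u,t)=1-\psi^*_r(u,t)$, $\phi^*_r(0,t-k)=1-\psi^*_r(0,t-k)$, together with $\phi^*(u,t-r-1)=1-\sum_{k=u}^{t-r-2}\mathbb{P}_u(\tau^*=k)$. The two contributions $\sum_{k=u}^{t-r-2}\mathbb{P}_u(\tau^*=k)$ appear with opposite signs and cancel, as do the unit constants on both sides, leaving exactly the claimed identity
\[
\psi^*_r(u,t)=\sum_{k=u}^{t-r-2}\mathbb{P}_u(\tau^*=k)\,\psi^*_r(0,t-k).
\]

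Everything on the right-hand side is computable: Lemma \ref{Lem2} supplies $\mathbb{P}_u(\tau^*=k)$ in closed form, while the initial value $\psi^*_r(0,n)=1-\phi^*_r(0,n)$ is obtained from the recursive identity \eqref{eqFinR6}, as illustrated in the preceding Remark. There is no serious obstacle in this proof: the genuinely non-trivial content, namely the strong Markov decomposition underlying \eqref{eqFinR1}, the explicit hitting-time distribution of Lemma \ref{Lem2}, and the joint recovery-time/overshoot law used to close the recursion for $\phi^*_r(0,n)$, has already been established earlier. The theorem itself is then a routine algebraic consequence of these ingredients combined with the deterministic lower bound $\tau^r\geqs u+r+1$.
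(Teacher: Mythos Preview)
Your proposal is correct and follows essentially the same route as the paper: the theorem is stated immediately after the derivation of \eqref{eqFinR1} and \eqref{eqFinR6} without a separate proof, precisely because it is the routine algebraic passage from $\phi$ to $\psi=1-\phi$ that you spell out, together with the trivial observation $\phi^*_r(u,t)=1$ for $t\leqs u+r+1$. Your explicit cancellation of $\sum_{k=u}^{t-r-2}\mathbb{P}_u(\tau^*=k)$ against the expansion of $\phi^*(u,t-r-1)$ is exactly the step the paper leaves implicit.
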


In the next subsection, we use the above expressions to derive results for the infinite-time Parisian ruin probabilities, for which, as will be seen, a more analytic expression can be found.

\section{Infinite-time Parisian ruin probability}

In this section we derive an explicit expression for the infinite-time Parisian survival (ruin) probabilities using the arguments of the previous section. First, let us recall that the infinite-time Parisian survival probability is defined as $\phi_r^*(u)=\lim_{t\rightarrow \infty} \phi_r^*(u,t)$, with the infinite-time dual ruin quantities being defined in a similar way i.e. $\phi^*(u)=\lim_{t\rightarrow \infty} \phi^*(u,t)$. Then, it follows by taking the limit $t\rightarrow \infty$, with $t \in \mathbb{N}$, Eq.\,\eqref{eqFinR1} reduces to
\begin{linenomath*}\begin{equation}
\label{eqParEQ1}
\phi^*_r(u) = \psi^*(u)\phi^*_r(0)+ \phi^*(u),
\end{equation}\end{linenomath*}

\noindent where $\phi^*_r(0)$ is the infinite-time probability of Parisian survival with zero initial reserve and satisfies $\phi^*_r(0)=\lim_{t \rightarrow \infty} \phi^*_r(0,t)$, where $\phi^*_r(0,t)$ is given by Eq.\,\eqref{eqFinR2}. Thus, $\phi^*_r(0)$ is given by

\noindent
\begin{linenomath*}\begin{equation*}
\label{eqParEQ2}
\phi^*_r(0) = p_0 \sum_{z=0}^\infty \mathbb{P}_{-1} (\tau^- \leqs  r, R^*_{\tau^-}=z) \phi^*_r(z) +  \sum_{j=0}^\infty p_{j+1} \phi^*_r(j),
\end{equation*}\end{linenomath*}
or equivalently

\begin{linenomath*}\begin{equation}
\label{eqParEQ3}
\phi^*_r(0) =  \sum_{k=0}^\infty \left( p_0 \mathbb{P}_{-1} (\tau^- \leqs  r, R^*_{\tau^-}=k) + p_{k+1} \right) \phi^*_r(k),
\end{equation}\end{linenomath*}
where $\mathbb{P}_{-1} (\tau^- \leqs  r, R^*_{\tau^-}=k)$ can be obtained from the result of Lemma \ref{Lem1}, i.e.
$$\mathbb{P}_{-1} (\tau^- \leqs  r, R^*_{\tau^-}=k)=\sum_{s=1}^r \mathbb{P}_{-1} (\tau^- =  s, R^*_{\tau^-}=k).$$

\noindent Considering the first term of the summation in the right hand side of Eq.\,\eqref{eqParEQ3} and solving with respect to $\phi^*_r(0)$, we get an explicit representation for $\phi^*_r(0)$, given by
\begin{linenomath*}\begin{equation}
\label{eqParEQ4}
\phi^*_r(0) =C^{-1}\sum_{k=1}^\infty \left( p_0 \mathbb{P}_{-1} (\tau^- \leqs  r, R^*_{\tau^-}=k) + p_{k+1} \right) \phi^*_r(k),
\end{equation}\end{linenomath*}
where
\begin{equation*}
\label{eqConst1}
C=1-p_0 \mathbb{P}_{-1}(\tau^- \leqs r, R^*_{\tau^-}=0)-p_1.
\end{equation*}

\noindent Now, since from Lemma \ref{Lem1} we can obtain an expression for the joint distribution of the time of recovery and the overshoot, namely $\mathbb{P}_{-1} (\tau^- \leqs  r, R^*_{\tau^-}=k)$, we can re-write Eq.\,\eqref{eqParEQ4} as

\begin{linenomath*}\begin{equation*}
\label{eqParEQ5}
\phi^*_r(0) =C^{-1}\sum_{k=1}^\infty a_k \phi^*_r(k),
\end{equation*}\end{linenomath*}
where $a_k =  \left( p_0 \mathbb{P}_{-1} (\tau^- \leqs  r, R^*_{\tau^-}=k) + p_{k+1} \right)$. Then, by substituting the general form of the infinite-time Parisian survival probability, given by \eqref{eqParEQ1}, into the above equation, and solving the resulting equation with respect to $\phi^*_r(0)$, we obtain
\begin{linenomath*}\begin{equation}
\label{eqParEQ6}
\phi^*_r(0) =\frac{C^{-1}\sum_{k=1}^\infty a_k \phi^*(k)}{1-C^{-1}\sum_{k=1}^\infty a_k \psi^*(k)}.
\end{equation}\end{linenomath*}

\noindent Note that, unlike for the finite-time case, in the infinite-time case we obtain an explicit expression for the Parisian survival probability, with zero initial reserve, which is given in terms of the infinite-time dual ruin probabilities. Thus, employing Eq.\,\eqref{eqParEQ1} and the result from Lemma \ref{Lem2} we obtain an explicit expression for the infinite-time Parisian survival probability, with general initial reserve $u\in \mathbb{N}$, given in the following Theorem.

\begin{Theorem} For $u\in \mathbb{N}$, the infinite-time Parisian ruin probability $\psi^*_r(u)$, is given by
\label{Thm1}
\begin{linenomath*}\begin{equation}
\label{eqThm1}
\psi^*_r(u)=\psi^*(u)\left(1-\frac{C^{-1}\sum_{k=1}^\infty a_k \phi^*(k)}{1-C^{-1}\sum_{k=1}^\infty a_k \psi^*(k)}\right),
\end{equation}\end{linenomath*}
where
\begin{equation}
\label{eqak1}
a_k= \left( p_0 \mathbb{P}_{-1} (\tau^- \leqs  r, R^*_{\tau^-}=k) + p_{k+1} \right),
\end{equation}
 and
$$C^{-1}=\left(1-p_0 \mathbb{P}_{-1}(\tau^- \leqs r, R^*_{\tau^-}=0)-p_1\right)^{-1}.$$
\end{Theorem}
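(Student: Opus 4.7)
My plan is to assemble the building blocks derived in the exposition just above the statement. First I would pass to the limit $t\to\infty$ in the finite-time identity \eqref{eqFinR1}. Since $\phi^*_r(u,t)\in[0,1]$ is monotone in $t$, dominated convergence applies to the sum, and $\sum_{k=u}^{\infty}\mathbb{P}_u(\tau^*=k)=\psi^*(u)$ together with $\phi^*(u,t-r-1)\to\phi^*(u)$ yield the clean decomposition \eqref{eqParEQ1}, namely $\phi^*_r(u)=\psi^*(u)\phi^*_r(0)+\phi^*(u)$. Because $\phi^*(u)=1-\psi^*(u)$, taking complements gives
\begin{equation*}
\psi^*_r(u) \;=\; \psi^*(u)\left(1-\phi^*_r(0)\right),
\end{equation*}
so the whole task reduces to computing $\phi^*_r(0)$ in closed form.

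For this, I would take the same limit in the one-step conditioning identity \eqref{eqFinR2}. Splitting according to whether the first period produces no gain (probability $p_0$, sending the process to $-1$, after which Lemma \ref{Lem1} provides the joint law of the recovery time and the overshoot at recovery) or a strictly positive gain (landing at a non-negative state), I arrive at the fixed-point equation \eqref{eqParEQ3}, $\phi^*_r(0)=\sum_{k=0}^\infty a_k\,\phi^*_r(k)$, with $a_k$ as in \eqref{eqak1}. Isolating the $k=0$ contribution and dividing by $C=1-a_0$ gives the representation \eqref{eqParEQ4}.

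The final step is to close the system by substituting the decomposition $\phi^*_r(k)=\psi^*(k)\phi^*_r(0)+\phi^*(k)$ from the first paragraph into the tail sum of \eqref{eqParEQ4}. This converts the functional equation into a linear equation in the single unknown $\phi^*_r(0)$,
\begin{equation*}
\phi^*_r(0)\left(1-C^{-1}\sum_{k=1}^\infty a_k\psi^*(k)\right) \;=\; C^{-1}\sum_{k=1}^\infty a_k\phi^*(k),
\end{equation*}
which solves explicitly to reproduce \eqref{eqParEQ6}. Feeding this back into $\psi^*_r(u)=\psi^*(u)(1-\phi^*_r(0))$ produces the claimed formula \eqref{eqThm1}.

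The main obstacles I expect are purely technical. First, I need to justify the interchanges of limit and infinite sum in passing from \eqref{eqFinR1} and \eqref{eqFinR2} to their infinite-time counterparts; these are routine given monotonicity in $t$ and the uniform bound by $1$. Second, and more delicate, I must verify that both denominators, $C$ and $1-C^{-1}\sum_{k\geqs 1}a_k\psi^*(k)$, are strictly positive so that the algebraic inversions are legitimate. The first holds because $p_0\mathbb{P}_{-1}(\tau^-\leqs r, R^*_{\tau^-}=0)+p_1<1$ under non-degeneracy of $Y_1$, and the second follows from the net profit condition $\mu>1$, which forces $\psi^*(k)<1$ for every $k\geqs 1$ and makes the coefficients $a_k$ sum strictly below $1$; combined, these ensure the geometric-type inversion is well defined.
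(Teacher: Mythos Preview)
Your proposal is correct and follows essentially the same route as the paper: pass to the limit in \eqref{eqFinR1} to obtain the decomposition \eqref{eqParEQ1}, take the limit in \eqref{eqFinR2} to get the fixed-point equation \eqref{eqParEQ3}, isolate the $k=0$ term to reach \eqref{eqParEQ4}, substitute \eqref{eqParEQ1} back in to solve for $\phi^*_r(0)$ as in \eqref{eqParEQ6}, and then take complements. Your additional attention to justifying the limit interchanges and the positivity of the two denominators goes beyond what the paper makes explicit, but the underlying argument is the same.
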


\begin{proof}
The result follows by combining Eqs.\,\eqref{eqParEQ1} and \eqref{eqParEQ6}, and recalling that $\phi^*_r(u)=1-\psi^*_r(u)$.
\end{proof}

\section{An alternative approach to the infinite-time dual ruin probability}

In this section we analyse an alternative approach, in order to find an explicit expression, for the infinite-time dual ruin probability. The method is based on the fact that the ruin probability $\psi^*(u)$ satisfies a difference equation, where a particular form of the solution is adopted. In the following, we show that this solution is indeed an analytical solution for $\psi^*(u)$ and is unique.

Although the result of Lemma \ref{Lem2} provides us with a general form for $\psi^*(u)=\sum_{k=0}^\infty \mathbb{P}_u (\tau^*=k)$, in terms of convolutions of the p.m.f. of $Y_1$, let us now consider the dual risk reserve process given in Eq.\,\eqref{eqDTDRP} with initial reserve $u+1$, $u\in \mathbb{N}$ and condition on the possible events in the first time period. Then, by law of total probability, we obtain a recursive equation for the infinite-time dual ruin probability, namely $\psi^*(\cdot)$, given by

\begin{linenomath*}\begin{align}
\label{eqDifEq1}
\psi^*(u+1) &= p_0 \psi^*(u)+ \sum_{j=1}^\infty p_j \psi^*(u+j) \\
&= \sum_{j=0}^\infty p_j \psi^*(u+j),
\end{align}\end{linenomath*}
with boundary conditions $\psi^*(0)=1$ and $\lim_{u\rightarrow \infty} \psi^*(u)=0$.

Equation \eqref{eqDifEq1} is in the form of an infinite-order difference (recursive) type equation. Thus, by adopting the general methodology for solving difference equations, we search for a solution of the form
\begin{linenomath*}\begin{equation*}
\psi^*(u)=cA^u,
\end{equation*}\end{linenomath*}
where $c$ and $A$ are constants to be determined. Using the given boundary conditions for $\psi^*(\cdot)$, it follows that the constant $c=1$ and $0\leqs A<1$. That is, the general solution to the recursive Eq.\,\eqref{eqDifEq1} is of the form
\begin{linenomath*}\begin{equation}
\label{eqGenSol}
\psi^*(u)=A^u,
\end{equation}\end{linenomath*}
for some $0\leqs A<1$. Substituting the general solution, given in Eq.\,\eqref{eqGenSol}, into Eq.\,\eqref{eqDifEq1}, yields
\begin{linenomath*}\begin{equation*}
\label{eqDifEq10}
A^{u+1}= \sum_{j=0}^\infty p_j A^{u+j}, \qquad u=0,1,2, \ldots,
\end{equation*}\end{linenomath*}
from which, dividing through by $A^u$ and defining the probability generating function (p.g.f.) of $Y_1$ by $\tilde{p}(z)=\sum_{i=0}^\infty p_iz^i$, we obtain
\begin{linenomath*}\begin{equation}
\label{eqGenSol1}
A= \tilde{p}(A), \qquad 0\leqs A < 1.
\end{equation}\end{linenomath*}

\noindent That is, $0\leqs A<1$ is a solution (if it exists) to the discrete-time dual analogue of Lundberg's fundamental equation, given by
\begin{linenomath*}\begin{equation}
\label{eqDifEq11}
\gamma(z)=0,
\end{equation}\end{linenomath*}
where $\gamma(z):=\tilde{p}(z)-z$.

\begin{Proposition}
\label{Prop1}
In the interval $[0,1)$ there exists a unique solution to the equation $\tilde{p}(z)-z=0$.
\end{Proposition}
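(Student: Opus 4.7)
My plan is to analyze $\gamma(z) = \tilde{p}(z) - z$ as a function on $[0,1]$, exploiting convexity together with the net profit condition $\mu = \mathbb{E}(Y_1) > 1$ and the mass-at-zero hypothesis $p_0 > 0$. I will establish uniqueness first via a shape argument and then existence via the intermediate value theorem.

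First I would record four basic properties of $\gamma$ on $[0,1]$: (i) $\gamma(0) = p_0 > 0$, using the standing assumption that the claims (gains) have a point mass at zero; (ii) $\gamma(1) = \tilde{p}(1) - 1 = 0$; (iii) $\gamma'(1) = \tilde{p}'(1) - 1 = \mu - 1 > 0$, by the net profit condition; and (iv) $\gamma''(z) = \sum_{i\geqs 2} i(i-1)p_i z^{i-2} \geqs 0$, so $\gamma$ is convex. The net profit condition actually upgrades this to strict convexity on $(0,1]$: indeed if $p_i = 0$ for all $i \geqs 2$, then $\mu = p_1 \leqs 1$, contradicting $\mu > 1$. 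Hence there exists some $i_0 \geqs 2$ with $p_{i_0} > 0$, so $\gamma''(z) > 0$ for $z \in (0,1]$.

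For existence, I use (iii) to produce a point $z_1 \in (0,1)$ at which $\gamma(z_1) < 0$: since $\gamma(1) = 0$ and $\gamma'(1) > 0$, the function is strictly decreasing as $z$ approaches $1$ from the left, so such a $z_1$ exists. Combined with $\gamma(0) = p_0 > 0$ from (i), continuity of $\gamma$ and the intermediate value theorem yield a root $A \in (0, z_1) \subset (0,1)$.

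For uniqueness, I would invoke the strict convexity established above: a strictly convex function on an interval has at most two zeros. Since $z = 1$ is already a zero of $\gamma$, at most one additional zero can lie in $[0,1)$, so the root $A$ produced in the previous step is unique. I expect the only subtle point to be the strict-convexity argument, where one must use the net profit condition to rule out the degenerate case in which $Y_1$ is supported on $\{0,1\}$; once that is in hand, the rest is standard calculus.
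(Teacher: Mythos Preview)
Your proof is correct and follows essentially the same route as the paper: both analyze $\gamma(z)=\tilde p(z)-z$ via its endpoint values, the sign of $\gamma'(1)$, and convexity to conclude there is exactly one root in $[0,1)$ besides the trivial root at $z=1$. Your explicit justification of \emph{strict} convexity (ruling out the degenerate case $Y_1\in\{0,1\}$ via $\mu>1$) is in fact a bit more careful than the paper's sketch; the only cosmetic slip is the phrase ``strictly decreasing as $z$ approaches $1$ from the left'' --- you mean that $\gamma$ is strictly increasing there, hence negative just below $1$ --- but your conclusion and the IVT step are unaffected.
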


\begin{proof}
It follows from the properties of a p.g.f. that
\begin{linenomath*}\begin{align*}
\gamma(0)&=p_0 \geqs 0, \\
\gamma'(0)&=p_1 -1 \leqs 0, \\
\gamma(1)&=0, \\
\gamma'(1)&=\mathbb{E}(Y_1) -1> 0, \\
\gamma''(z)&>0, \quad \forall z\in[0,1).
\end{align*}\end{linenomath*}

\noindent From the above conditions, which show the characterisitics of the function $\gamma(z):=\tilde{p}(z)-z$ (see Fig:\,\ref{fig:Lund}), it follows that there exists a solution to $\gamma(z)=0$ at $z=1$ and a second solution $z=A$, which is unique in the interval $[0,1)$.
\begin{figure}[h!]
\centering
\includegraphics[width=7.5cm]{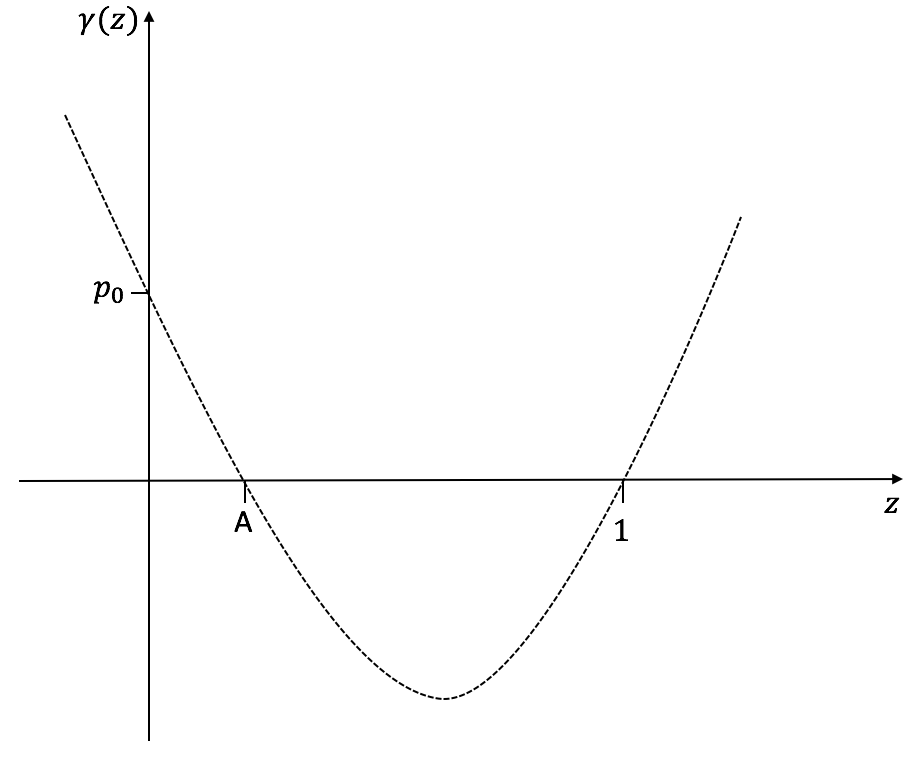}
\caption{Graph of the function $\tilde{p}(z)-z$.}
\label{fig:Lund}
\end{figure}
\end{proof}

\noindent Hence, from Eqs.\,\eqref{eqGenSol}, \eqref{eqGenSol1} and Proposition \ref{Prop1}, we obtain an expression for the infinite-time dual ruin probability, given by the following theorem.
\begin{Theorem}
\label{Thm2}
The infinite-time dual probability of ruin, namely $\psi^*(u)$ for $u \in \mathbb{N}$, is given by
\begin{linenomath*}\begin{equation}
\label{eqCLRuinIT}
\psi^*(u) = A^u,
\end{equation}\end{linenomath*}
where $A$ is the unique solution in the interval $[0,1)$ to the equation $\tilde{p}(z)-z=0$, with $\tilde{p}(z)$ the p.g.f.\,of $Y_1$.
\end{Theorem}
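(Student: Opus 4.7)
The plan is to verify the ansatz directly in the recursion \eqref{eqDifEq1} that has already been established for $\psi^*$, and then invoke Proposition \ref{Prop1} to pin down the parameter $A$. Substituting $\psi^*(u)=A^u$ into \eqref{eqDifEq1} and dividing through by $A^u$ collapses the recursion to the characteristic equation $A = \tilde{p}(A)$, i.e.\ Eq.\,\eqref{eqGenSol1}. The boundary conditions then take over: $\psi^*(0) = A^0 = 1$ is automatic, and $\psi^*(u) \to 0$ as $u\to\infty$ rules out the trivial root $A=1$, forcing $A \in [0,1)$. Proposition \ref{Prop1} then provides the unique such root, and the formula $\psi^*(u) = A^u$ follows.

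The subtle point is justifying that the geometric ansatz is not merely \emph{a} solution of \eqref{eqDifEq1} with the correct boundary behaviour but genuinely equals the ruin probability. Since \eqref{eqDifEq1} is an infinite-order recursion, its solution space is a priori large and some extra justification is required. I would close this gap with a short martingale argument: for $A \in [0,1)$ satisfying $\tilde{p}(A) = A$, the process $M_n := A^{R^*_n}$ is a bounded martingale, because
\begin{linenomath*}
\begin{equation*}
\mathbb{E}\bigl[A^{R^*_{n+1}} \mid \mathcal{F}_n\bigr] = A^{R^*_n - 1}\tilde{p}(A) = A^{R^*_n}.
\end{equation*}
\end{linenomath*}
Applying optional stopping at $\tau^*\wedge n$ and letting $n\to\infty$, dominated convergence (justified by $M_n \in [0,1]$) together with $R^*_{\tau^*} = 0$ on $\{\tau^* < \infty\}$ and $A^{R^*_n}\to 0$ on $\{\tau^* = \infty\}$ (using the net profit condition $\mu > 1$, which forces $R^*_n\to\infty$ a.s.\ on non-ruin) yields $A^u = \mathbb{E}_u[M_0] = \psi^*(u)$.

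The main obstacle is this identification step: the direct recursion-plus-boundary-conditions argument is not quite enough by itself to single out $\psi^*$ among all bounded solutions of \eqref{eqDifEq1}. The martingale route handles this cleanly while reusing exactly the same root $A$ supplied by Proposition \ref{Prop1}, and the only technical item to check---the interchange of expectation and limit---is immediate from the uniform bound $M_n \in [0,1]$.
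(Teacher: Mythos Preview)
Your proposal is correct, and the first half---substituting the ansatz $\psi^*(u)=A^u$ into the recursion \eqref{eqDifEq1}, reading off the characteristic equation $A=\tilde p(A)$, and using the boundary conditions together with Proposition~\ref{Prop1} to select the root in $[0,1)$---is exactly what the paper does. The paper stops there: it verifies that $A^u$ satisfies \eqref{eqDifEq1} and the boundary conditions, and declares this to be the ruin probability without further justification.

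Where you go beyond the paper is in flagging, and then closing, the identification gap. The paper's argument only shows that $A^u$ is \emph{a} solution of an infinite-order linear recursion with the right boundary behaviour; it does not establish that this solution coincides with $\psi^*(u)$, nor that bounded solutions of \eqref{eqDifEq1} vanishing at infinity are unique. Your martingale/optional-stopping step supplies precisely this missing piece: it identifies $A^u$ directly with $\mathbb{P}_u(\tau^*<\infty)$ without appealing to any uniqueness statement for the recursion. So your proof is strictly more complete than the paper's, at the cost of one short additional paragraph, and the martingale route is the standard way to make such geometric-ruin formulas rigorous.
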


\begin{Remark} \rm We note that the p.g.f. $\tilde{p}(z)$ converges for all $|z| \leqs 1$ and thus, in the interval $z\in[0,1]$ the p.g.f.\,exists (finite) for all probability distributions i.e.\,light and heavy-tailed. Therefore, it follows that Theorem \ref{Thm2} holds for both light and heavy-tailed gain size distributions. 
\end{Remark}
\begin{Remark} \rm
Note that for the general claim size distribution of $Y_i$, one cannot expect the heavy-tailed asymptotics of the Parisian ruin probability 
$\psi_r^*(u)$. Indeed, recall that from \eqref{eqThm1} we have that $\psi_r^*(u)= D \psi^*(u)\leq \psi^*(u)$
for the constant $D=1-\frac{C^{-1}\sum_{k=1}^\infty a_k \phi^*(k)}{1-C^{-1}\sum_{k=1}^\infty a_k \psi^*(k)}$.
Now, observing our discrete process $R^*_n$ at the moments of claim arrivals, we can conclude that:  
\[\psi^*(u)=\mathbb{P}\left(\max_{n\geq 0}\sum_{i=1}^n (T_i-\tilde{Y}_i)>u\right),\]
where $\{T_i\}_{\{i=1,2\ldots\}}$ is a sequence of i.i.d. interarrival times independent of the renormalized sequence of i.i.d. 
claim sizes $\{\tilde{Y}_i\}_{\{i=1,2\ldots\}}$ with the law $\mathbb{P}(\tilde{Y_i}=k)=p_k/(1-p_0)$ for $k=1,2,\ldots$.
In our model the generic interarrvial time $T_i$ has the geometric distribution with the parameter $p_0$ and hence it is 
light-tailed. From general theory of level crossing probabilities by random walks, see e.g. Theorem XIII.5.3 and Remark XIII.5.4 of Asmussen (2003), 
it follows that asymototic tail of the ruin probability $\psi^*(u)$ always decay exponentially fast.
The same concerns then the Parisian ruin probability $\psi_r^*(u)$.
\end{Remark}

\section{Binomial/Geometric model}

In this section, we consider the Binomial/Geometric model as studied by Gerber (1988), Shiu (1989) and Dickson (1998), among others and we derive an exact expression for the infinite-time dual probability of ruin, namely $\psi^*(u)$. Consequently, from Theorem \ref{Thm2} we obtain an expression for the corresponding infinite-time Parisian ruin probability, $\psi^*_r(u)$.

 In the Binomial/Geometric model, it is assumed that the gain size random variables $\{Y_i\}_{i \in \mathbb{N}^+}$ have the form $Y_i=I_i\cdot X_i$, where $I_i$ for $i\in \mathbb{N}^+$, are i.i.d.\,random variables following a Bernoulli distribution with parameter $b\in [0,1]$ i.e. $\mathbb{P}(I_1=1)=1-\mathbb{P}(I_1=0)=b$ and the random gain amount $\{X_k\}_{k\in\mathbb{N}^+}$ are i.i.d.\,random variables following a geometric distribution with parameter $(1-q)\in [0,1]$ i.e. $\mathbb{P}(Y_1=0)=p_0=1-b$ and $\mathbb{P}(Y_1=k)=p_k=bq^{k-1}(1-q)$ for $k\in \mathbb{N}^+$.

\begin{Lemma}
\label{Lem3}
For $u\in \mathbb{N}$, the infinite-time dual ruin probability, $\psi^*(u)$, in the Binomial/Geometric model, with parameters $b\in [0,1]$ and $(1-q)\in[0,1]$ such that $b+q>1$, is given by
\begin{linenomath*}\begin{equation}
\label{eqCLRuinGeo}
\psi^*(u)=\left(\frac{1-b}{q}\right)^u.
\end{equation}\end{linenomath*}
\end{Lemma}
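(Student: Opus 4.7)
The plan is to apply Theorem \ref{Thm2}, which asserts that $\psi^*(u)=A^u$ where $A$ is the unique root of $\tilde p(z)-z=0$ in $[0,1)$. So it suffices to identify this root explicitly in the Binomial/Geometric case and show it equals $(1-b)/q$.

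First, I would compute the probability generating function of $Y_1$. Using $p_0=1-b$ and $p_k=bq^{k-1}(1-q)$ for $k\geq 1$, a geometric series gives
\begin{equation*}
\tilde p(z)=(1-b)+\sum_{k=1}^{\infty} b(1-q)q^{k-1}z^k=(1-b)+\frac{b(1-q)z}{1-qz}.
\end{equation*}
Next I would set $\tilde p(z)=z$ and clear the denominator $(1-qz)$. After expanding and collecting powers of $z$, the fundamental equation becomes the quadratic
\begin{equation*}
qz^2-(1-b+q)z+(1-b)=0.
\end{equation*}

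The key observation is that the discriminant simplifies to a perfect square:
\begin{equation*}
(1-b+q)^2-4q(1-b)=\bigl((1-b)-q\bigr)^2.
\end{equation*}
So the two roots are $z=1$ and $z=(1-b)/q$, by the quadratic formula. Under the standing assumption $b+q>1$ we have $(1-b)/q<1$, and since $(1-b)/q\geq 0$ trivially, this root lies in $[0,1)$. By Proposition \ref{Prop1} it is the unique such root, so $A=(1-b)/q$.

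The only real obstacle is the bookkeeping in the quadratic (making sure the discriminant collapses correctly and that one correctly distinguishes the root in $[0,1)$ from the spurious root $z=1$), but the net profit condition $\mu=\mathbb{E}(Y_1)=b/(1-q)>1$ is exactly equivalent to $b+q>1$, so the hypothesis stated in the lemma guarantees $A<1$. Substituting into $\psi^*(u)=A^u$ from Theorem \ref{Thm2} yields \eqref{eqCLRuinGeo}.
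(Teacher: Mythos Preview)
Your proposal is correct and follows essentially the same approach as the paper: both invoke Theorem~\ref{Thm2}, compute $\tilde p(z)$ for the Binomial/Geometric model, reduce the fundamental equation to a quadratic with roots $z=1$ and $z=(1-b)/q$, and then use the net profit condition $b+q>1$ together with Proposition~\ref{Prop1} to identify $A=(1-b)/q$. The only cosmetic difference is that you solve the quadratic via the discriminant whereas the paper states the two roots directly after normalizing the leading coefficient.
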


\begin{proof}
From Theorem \ref{Thm2}, the infinite-time dual ruin probability, $\psi^*(u)$, has the form $\psi^*(u)=A^u$, where $0\leqs A<1$, is the solution to $\gamma(z):=\tilde{p}(z)-z=0$, with
 \begin{linenomath*}\begin{equation}
 \label{eqBinGeo1}
 \tilde{p}(z)=1-b+b\tilde{q}(z),
 \end{equation}\end{linenomath*}
 and $\tilde{q}(z)$ is the p.g.f.\,of a geometric random variable, which takes the form
 \begin{linenomath*}\begin{equation}
 \label{eqBinGeo2}
 \tilde{q}(z)=\frac{(1-q)z}{1-qz}.
 \end{equation}\end{linenomath*}
Combining Eqs.\,\eqref{eqBinGeo1} and \eqref{eqBinGeo2} and after some algebraic manipulations, Lundberg's fundamental equation $\gamma(z)=0$, yields a quadratic equation of the form
\begin{linenomath*}\begin{equation*}
 z^2+k_1z+k_2=0,
 \end{equation*}\end{linenomath*}
 where
 \begin{linenomath*}\begin{align*}
 k_1&=\frac{b-1}{q}-1, \\
 k_2&=\frac{1-b}{q}.
 \end{align*}\end{linenomath*}
The above quadratic equation has two roots $z_1=(1-b)/q$ and $z_2=1$. Finally, from the positive drift assumption in the the model set up, we have that $\mathbb{E}(Y_1)=b/(1-q)>1$, from which it follows that $b+q>1$ and the solution $z_1\in [0,1)$. Thus, we have $A=z_1$, since this solution is unique in the interval $[0,1)$ (see Proposition \ref{Prop1}).
\end{proof}

\section{Parisian ruin for the Gambler's ruin problem}
In this section we derive an explicit expression for the infinite-time Parisian ruin probability for one of the more fundamental ruin problems, namely the Gambler's ruin problem. In this model a player makes a bet on the outcome of a random game, with a chance to double their bet with probability $b\in [0,1]$. Ruin in this model is defined as being the event that the player runs out of money at some point (see Feller (1968)).

Mathematically, the Gambler's ruin model can be described by the discrete-time dual risk model, considered in the previous sections, with a loss probability $p_0=1-b$, corresponding win probability $p_2=b$ and $p_k=0$ otherwise. Further, in order to satisfy the net profit condition, and consequently avoid definite ruin over an infinite-time horizon, it follows that $b > 1/2$.

Under these assumptions Lundberg's fundamental equation, $\gamma(z)=0$, produces a quadratic equation of the form
\begin{equation*}
z^2 - \frac{1}{b}z+\frac{1-b}{b}=0,
\end{equation*}
which has solutions $z_1=1$ and $z_2=\frac{1-b}{b}$. From the net profit condition, i.e. $b>1/2$, it follows that $z_2=\frac{1-b}{b} < 1$. Thus, from Theorem \ref{Thm2}, we have that $A=\frac{1-b}{b}$ and the classic Gambler's ruin probability is given by
\begin{linenomath*}\begin{equation}
\label{eqGambler'sRuin}
\psi^*(u)=\left(\frac{1-b}{b}\right)^u,
\end{equation}\end{linenomath*}
as seen in Feller (1968). Finally, from Theorem \ref{Thm1}, the infinite-time Parisian ruin probability for the Gambler's ruin problem is given by the following Proposition.

\begin{Proposition}
\label{PropGR}
The infinite-time Parisian ruin probability to the Gambler's ruin problem, with loss probability $b < 1/2$, is given by
\begin{equation}
\label{eqGR10}
\psi_r^*(u)=\frac{1-bC_1}{1-(1-b)C_1}\left(\frac{1-b}{b}\right)^{u+1} ,
\end{equation}
where
\begin{equation}
\label{eqC1}
C_1=\sum_{n=1}^{r}\,  p_{n-1}^{*(n-1)} -\sum_{n=1}^{r} \sum_{i=2}^{n-1} \frac{1}{n-i} p_{n-1-i}^{*(n-i)}\,p_i^{*(i-1)}.
\end{equation}
\end{Proposition}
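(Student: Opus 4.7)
The plan is to specialise Theorem \ref{Thm1} to the binary gain distribution of the Gambler's problem ($p_0=1-b$, $p_2=b$, and $p_k=0$ otherwise). Since the classic Gambler's ruin probability $\psi^*(u)=\left(\tfrac{1-b}{b}\right)^u$ has already been established via Theorem \ref{Thm2}, the remaining task is to identify the constants $a_k$, $k\ge 1$, and $C$ appearing in \eqref{eqThm1}, evaluate the two infinite sums, and simplify.

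First I would substitute the binary distribution into Lemma \ref{Lem1}. Because the only possible gains are $0$ and $2$, each net increment of $R^*$ is $\pm 1$; starting from $-1$ the process therefore cannot skip over $0$, so the overshoot satisfies $R^*_{\tau^-}=0$ almost surely and $\mathbb{P}_{-1}(\tau^-=n, R^*_{\tau^-}=k)=0$ for $k\ne 0$. For $k=0$, the requirement $p_{1+n-j}\ne 0$ combined with $j\le n-1$ forces $j=n-1$ in both sums of Lemma \ref{Lem1}, and the common factor $p_2=b$ pulls out, giving
\begin{equation*}
\mathbb{P}_{-1}(\tau^- = n, R^*_{\tau^-}=0) = b\left(p_{n-1}^{*(n-1)} - \sum_{i=2}^{n-1}\frac{1}{n-i}\,p_{n-1-i}^{*(n-i)}\,p_i^{*(i-1)}\right).
\end{equation*}
Summing over $n=1,\dots,r$ yields $\mathbb{P}_{-1}(\tau^-\le r, R^*_{\tau^-}=0)=bC_1$, with $C_1$ as in \eqref{eqC1}.

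From here $a_1=p_2=b$ and $a_k=0$ for $k\ge 2$, while $C=1-p_0\,bC_1-p_1=1-b(1-b)C_1$. Both infinite sums in \eqref{eqThm1} collapse to a single term,
\begin{equation*}
\sum_{k=1}^\infty a_k\phi^*(k)=b\left(1-\tfrac{1-b}{b}\right)=2b-1,\qquad \sum_{k=1}^\infty a_k\psi^*(k)=b\cdot\tfrac{1-b}{b}=1-b.
\end{equation*}
Plugging these into \eqref{eqThm1} and using the identities $C-b=(1-b)(1-bC_1)$ and $C-(1-b)=b(1-(1-b)C_1)$, the bracketed factor simplifies to $\tfrac{(1-b)(1-bC_1)}{b(1-(1-b)C_1)}$; multiplying by $\psi^*(u)=\left(\tfrac{1-b}{b}\right)^u$ produces \eqref{eqGR10}.

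The only substantive step is the evaluation of Lemma \ref{Lem1} at the binary distribution, namely recognising that only $k=0$ can appear as an overshoot and collapsing the $j$-summations to the single index $j=n-1$, so that the surviving expression matches \eqref{eqC1} term-by-term; everything after that is routine algebra. I would also note in passing that the stated condition ``$b<1/2$'' must refer to the loss probability being less than $1/2$, since in the win-probability convention used throughout this section the net-profit condition reads $b>1/2$, which is precisely what ensures $(1-b)/b<1$ and the whole expression makes sense.
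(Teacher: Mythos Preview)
Your proof is correct and follows essentially the same route as the paper: specialise Theorem~\ref{Thm1} to $p_0=1-b$, $p_2=b$, observe that the overshoot at recovery must be zero so that $a_k=0$ for $k\ge 2$ and $a_1=b$, collapse the $j$-sum in Lemma~\ref{Lem1} to $j=n-1$ to identify $\mathbb{P}_{-1}(\tau^-\le r,R^*_{\tau^-}=0)=bC_1$, and then simplify the resulting expression $\frac{C-b}{C-(1-b)}\left(\frac{1-b}{b}\right)^u$ via the factorisations $C-b=(1-b)(1-bC_1)$ and $C-(1-b)=b(1-(1-b)C_1)$. Your additional remark that the hypothesis should read ``win probability $b>1/2$'' (equivalently, loss probability $1-b<1/2$) is well taken and clarifies an inconsistency in the statement.
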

\begin{proof}
Using the result of Theorem \ref{Thm1}, and the form of the classic Gambler's ruin problem given by Eq.\,\eqref{eqGambler'sRuin}, it remains to find explicit expressions for the coefficients $a_k$, $k=1,\ldots, \infty$ and the constant $C^{-1}$.

Let us first consider the coefficients $a_k$, given by Eq.\,\eqref{eqak1}, of the form
\begin{equation*}
\label{eqak}
a_k=\left( p_0 \mathbb{P}_{-1} (\tau^- \leqs  r, R^*_{\tau^-}=k) + p_{k+1} \right).
\end{equation*}
Recalling that in the Gambler's ruin problem the p.m.f's of the positive gain sizes i.e. $p_k=0$ for $k \neq 0,2$, it follows that only positive jumps of size $Y_i=2$, for $i\in \mathbb{N}^+$, can occur (with probability $b$) and thus, the joint distribution of recovery and the overshoot at the time of recovery, namely $\mathbb{P}_{-1} (\tau^- \leqs  r, R^*_{\tau^-}=k)=0$, for all $k\neq 0$. Thus, we have that, for $k=1,\ldots, \infty$, $a_k=p_{k+1}$ and it follows
\begin{equation*}
a_k=\begin{cases}
b, \qquad &k=1, \\
0 \qquad &otherwise.
\end{cases}
\end{equation*}
Substituting this into the result of Theorem \ref{Thm1} and after some algebraic manipulations, we obtain
\begin{equation*}
\psi_r^*(u)=\frac{C-b}{C-(1-b)}\left(\frac{1-b}{b} \right)^u,
\end{equation*}
where $C=1-(1-b) \mathbb{P}_{-1}(\tau^- \leqs r, R^*_{\tau^-}=0)$.

Finally, by setting $z=0$ in Eq.\,\eqref{eqProp1} and noticing that, since $p_k=0$, for $k=3,4,\ldots$, only the term $j=n-1$ remains in both summation terms, we obtain
\begin{equation*}
\mathbb{P}_{-1}(\tau^- \leqs r, R^*_{\tau^-}=0)=b \left(\sum_{n=1}^{r}\,  p_{n-1}^{*(n-1)} -\sum_{n=1}^{r} \sum_{i=2}^{n-1} \frac{1}{n-i} p_{n-1-i}^{*(n-i)}\,p_i^{*(i-1)}\right),
\end{equation*}
and it follows that $C=1-b(1-b)C_1$, where $C_1$ is given by Eq.\,\eqref{eqC1}. Finally, the result follows after some algebraic manipulations.
\end{proof}

\section*{Acknowledgments}
\noindent 
The work of Z. Palmowski is partially supported by the National Science Centre under the grant
2016/23/B/HS4/00566 (2017-2020).



\end{document}